\theoremstyle{plain}
\newtheorem{thm}{Theorem}[section]
\newtheorem{lem}[thm]{Lemma}
\newtheorem{prop}[thm]{Proposition}
\theoremstyle{definition}
\newtheorem{defn}[thm]{Definition}
\newtheorem{rem}[thm]{Remark}
\title
{On Hofmann's  bilinear estimate}
\author{Pascal Auscher} 
\address{Pascal Auscher, Universit\'e de Paris-Sud, UMR du CNRS 8628, 91405 Orsay Cedex, France}
\email{pascal.auscher@math.u-psud.fr}
\mathchardef\semic="303B
\newcommand{\R}{{\mathbf R}}
\newcommand{\C}{{\mathbf C}}
\newcommand{\A}{{\mathcal A}}
\newcommand{\mL}{{\mathcal L}}
\newcommand{\mP}{{\mathcal P}}
\newcommand{\mS}{{\mathcal S}}
\newcommand{\mR}{{\mathcal R}}
\DeclareMathOperator{\re}{Re}
\newcommand{\sett}[2]{ \{ #1 \, \semic \, #2 \} }
\newcommand{\nul}{\textsf{N}}
\newcommand{\ran}{\textsf{R}}
\newcommand{\dom}{\textsf{D}}
\newcommand{\conj}[1]{\overline{#1}}
\newcommand{\sgn}{\text{{\rm sgn}}}
\newcommand{\barint}{\mbox{$ave \int$}}
\newcommand{\divv}{{\text{{\rm div}}}}
\newcommand{\curl}{{\text{{\rm curl}}}}
\newcommand{\tb}[1]{\| \hspace{-0.42mm} | #1 \| \hspace{-0.42mm} |}
\newcommand{\wt}{\widetilde}
\newcommand{\ta}{{\scriptscriptstyle \parallel}}
\newcommand{\pd}{\partial}
\newcommand{\oA}{{\overline A}}
\newcommand{\uA}{{\underline A}}
\newcommand{\ol}[1]{{\overline {#1}}}
\def\barint_#1{\mathchoice
            {\mathop{\vrule width 6pt
height 3 pt depth -2.5pt
                    \kern -8.8pt
\intop}\nolimits_{#1}}%
            {\mathop{\vrule width 5pt height
3 pt depth -2.6pt
                    \kern -6.5pt
\intop}\nolimits_{#1}}%
            {\mathop{\vrule width 5pt height
3 pt depth -2.6pt
                    \kern -6pt
\intop}\nolimits_{#1}}%
            {\mathop{\vrule width 5pt height
3 pt depth -2.6pt
          \kern -6pt \intop}\nolimits_{#1}}}
\begin{document}

\begin{abstract}
Using the framework of a previous article joint  with Axelsson and McIntosh, we extend to systems  two results of S. Hofmann for real symmetric equations and their perturbations going back to a work of B. Dahlberg for Laplace's equation on Lipschitz domains, The first one is a certain bilinear estimate for a class of weak solutions  and the second is a criterion which allows to identify the domain of the generator of the semi-group yielding such solutions. \end{abstract}
\maketitle

\subjclass{MSC classes: 35J25, 35J55, 47N20, 42B25}

\keywords{Keywords: elliptic systems; Dirichlet problem; quadratic estimates; Carleson measures}

\section{Introduction}

S. Hofmann proved  in \cite{H} that weak solutions of
\begin{equation}  \label{eq:divformscalar}
  \divv_{t,x} A(x) \nabla_{t,x}U(t,x)= \sum_{i,j=0}^n \pd_i A_{i,j}(x) \pd_j U(t,x) =0
\end{equation}
on the upper half space $\R^{1+n}_+ := \sett{(t,x)\in\R\times \R^n}{t>0}$, $n\ge 1$,
where the matrix $A=(A_{i,j}(x))_{i,j=0}^n\in L_\infty(\R^n;\mL(\C^{1+n}))$ 
is assumed to be $t$-independent and within some small  $L_{\infty}$ neighborhood of  a real symmetric strictly elliptic $t$-independent matrix, obey the following bilinear estimate
$$
\left|\iint_{\R^{1+n}_+} \nabla_{t,x} U \cdot \ol{\bf v} \, dtdx \right| \le C \|U_{0}\|_{2}( \tb{t\nabla {\bf v}} + \|N_{*}{\bf v}\|_{2})
$$
for all $\C^{1+n}$-valued  field ${\bf v}$ such  that the right-hand side is finite. See below for the definition of the square-function $\tb{\ }$ and the non-tangential maximal operator $N_{*}$. 
The trace of $U$ at $t=0$ is assumed to be in the sense of non-tangential convergence a.e. and in $L_{2}(\R^n)$.

In addition, he proves that the solution operator $U_{0}\to U(t,\cdot)$ defines a bounded $C_{0}$ semi-group on $L_{2}(\R^n)$ whose infinitesimal generator $\A$ has domain $W^{1,2}(\R^n)$ with $\|\A f\|_{2}\sim \|\nabla f \|_{2}$. 

Such results  were first proved by  B. Dahlberg \cite{D}  for harmonic functions on a  Lipschitz domain. A version of the bilinear estimate for Clifford-valued monogenic functions was proved by Li-McIntosh-Semmes \cite{LMS}. A short proof of Dahlberg's estimate for harmonic functions and some applications appear in Mitrea's work \cite{M}.  $L^p$ versions are recently discussed  by Varopoulos \cite{Va}. 

Hofmann's arguments for variable coefficients rely on  the deep results of \cite{AAAHK}, and in particular Theorem 1.11 there where  the boundedness and invertibility of the layer potentials are obtained  from a $T(b)$ theorem, Rellich estimates in the case of real symmetric matrices and  perturbation. This also generalizes somehow the case where $A_{0,i}=A_{i,0}=0$ for $i=1, \ldots, n$ corresponding to the Kato square root problem. 

The recent works \cite{AAM,elAAM}, pursuing ideas in  \cite{AAH},  allow us to extend this further to systems, making clear in particular that specificities of real symmetric coefficients and their perturbations  and of equations  - in particular  the De Giorgi-Nash-Moser estimates - are not needed: it only depends on whether the Dirichlet problem is solvable. We use the solution operator constructed in \cite{AAM} and the proof  using $P_{t}-Q_{t}$ techniques of Coifman-Meyer from \cite{CM} makes transparent  the para-product like character of this bilinear estimate. We also establish  a  necessary and sufficient condition telling when the domain of the   infinitesimal generator $\A$ of the Dirichlet semi-group is $W^{1,2}$.  

We apologize to the reader for the necessary conciseness of this note and suggests he (or she) has (at least) the references \cite{AAH, AAM, elAAM} handy. In Section 2, we try to extract from them the relevant information.  The proof or the bilinear estimate for variable coefficients systems is in Section 3. Section 4 contains the discussion on the domain of the Dirichlet semi-group.

\section{Setting}

We begin by giving a precise definition of well-posedness of the  Dirichlet problem for systems.
Throughout this note, we use the notation $X \approx Y$ and $X \lesssim Y$ for estimates
to mean that there exists a constant $C>0$, independent of the variables in the estimate, 
such that $ X/C \le Y \le CX$ and $X\le C Y$, respectively.

We write $(t,x)$ for the standard coordinates  for $\R^{1+n}=\R \times \R^n$,  $t$  standing  for the vertical or normal coordinate. 
For vectors ${\bf v}=({\bf v}_i^\alpha)_{0\le i\le n}^{1\le \alpha\le m} \in \C^{(1+n)m}$, 
we write ${\bf v}_0\in\C^m$ and ${\bf v}_\ta\in\C^{nm}$ for the
normal and tangential parts of ${\bf v}$, 
i.e.~${\bf v}_0= ({\bf v}_0^\alpha)^{1\le \alpha\le m}$ 
whereas
${\bf v}_\ta= ({\bf v}_i^\alpha)_{1\le i\le n}^{1\le \alpha\le m}$. 

For systems, gradient and divergence act as $(\nabla_{t,x}U)_i^\alpha= \pd_i U^\alpha$ and 
$(\divv_{t,x}{\bf F})^\alpha= \sum_{i=0}^n \pd_i {\bf F}^\alpha_i$, with correponding
tangential versions $\nabla_x U= (\nabla_{t,x}U)_\ta$
and $(\divv_x {\bf F})^\alpha= \sum_{i=1}^n \pd_i {\bf F}^\alpha_i$.
With 
 $\curl_{x} {\bf F}_\ta=0$, we understand
$\pd_j{\bf F}_i^\alpha= \pd_i {\bf F}_j^\alpha$, for all $i$, $j=1,\ldots,n,\alpha=1,\ldots, m$.

We consider divergence form second order elliptic systems
\begin{equation}  \label{eq:divform}
  \sum_{i,j=0}^n \sum_{\beta=1}^m \pd_i A^{\alpha,\beta}_{i,j}(x) \pd_j U^\beta(t,x) =0,\qquad \alpha=1,\ldots, m,
\end{equation}
on the half space $\R^{1+n}_+ := \sett{(t,x)\in\R\times \R^n}{t>0}$, $n\ge 1$,
where the matrix $A=(A^{\alpha,\beta}_{ij}(x))_{i,j=0,\ldots ,n}^{\alpha,\beta=1,\ldots ,m}\in L_\infty(\R^n;\mL(\C^{(1+n)m}))$ 
is assumed to be $t$-independent with complex coefficients and {\em strictly accretive} on
$\nul(\curl_\ta)$, in the sense that
there exists $\kappa>0$ such that
\begin{equation}   \label{eq:accrassumption}
  \sum_{i,j=0}^n\sum_{\alpha,\beta=1}^m \int_{\R^n} \re (A_{i,j}^{\alpha,\beta}(x){\bf f}_j^\beta(x) \conj{{\bf f}_i^\alpha(x)}) dx\ge \kappa 
   \sum_{i=0}^n\sum_{\alpha=1}^m \int_{\R^n} |{\bf f}_i^\alpha(x)|^2dx,
\end{equation}
for all 
${\bf f}\in\nul(\curl_\ta):= \sett{{\bf g}\in L_2(\R^n;\C^{(1+n)m})}{\curl_x({\bf g}_\ta)=0}$.  This is nothing but ellipticity in the sense of  G\aa rding. See the discussion in \cite{AAM}.  By changing $m$ to $2m$ we could assume that the coefficients are real-valued. But this does not simplify matters and we need the complex hermitean structure of our $L_{2}$ space anyway. 

\begin{defn}   \label{defn:wellposed}
The Dirichlet problem (Dir-$A$) is said to be {\em well-posed} if for each
$u\in L_2(\R^n;\C^m)$, there is a unique function
$$
U_t(x) =U(t,x)\in C^1(\R_+; L_2(\R^n;\C^m))
$$
such that $\nabla_{x} U\in C^0(\R_+; L_2(\R^n;\C^{nm}))$,
where $U$ satisfies (\ref{eq:divform}) for $t>0$,
$\lim_{t\rightarrow 0}U_t=u$,
$\lim_{t\rightarrow \infty}U_t=0$, $\lim_{t\rightarrow \infty}\nabla_{t,x} U_t=0$ 
in $L_2$ norm,
and 
$\int_{t_0}^{t_1} \nabla_{x} U_s\, ds$
converges in $L_2$ when $t_0\rightarrow 0$ and $t_1\rightarrow\infty$.
More precisely, by $U$ satisfying (\ref{eq:divform}), we mean that 
$\int_t^\infty ((A\nabla_{s,x}U_s)_\ta, \nabla_x v)ds= -( (A\nabla_{t,x}U_t)_0, v)$
for all $v\in C_0^\infty(\R^{n};\C^m)$.
\end{defn}

Restricting to real symmetric equations and their perturbations, this definition is not the one taken in \cite{H} . However, a sufficient condition is provided in \cite{AAM}  to insure that the two methods give rise to the same solution. See also \cite[Corollary 4.28]{AAAHK}. It covers the matrices listed in Theorem  \ref{thm:bvpfordivform} below. This definition is more akin to well-posedness for a Neumann problem (see Section \ref{domain}).

\begin{rem} In the case of block matrices,  ie $A_{0,i}^{\alpha,\beta}(x)=0=A_{i,0}^{\alpha,\beta}(x)$, $1\le i\le n, 1\le \alpha, \beta\le m$, the second order system \eqref{eq:divform} can be solved using semi-group theory: $V(t,\cdot)= e^{-tL^{1/2}}u_{0}$ for $L=-A_{00}^{-1} \divv_{x} A_{\ta\ta} \nabla_{x}$ acting as an unbounded operator on $L_{2}(\R^n, C^{nm})$ (See below for the notation).   This solution satisfies $
V_{t}=V(t,\cdot)\in C^2(\R_+; L_2(\R^n;\C^m)) \cap C^1(\R_{+}, \dom(L^{1/2})),$ $\lim_{t\rightarrow 0}V_{t}=u_{0}$,
$\lim_{t\rightarrow \infty}V_t=0$ 
in $L_2$ norm, and   (\ref{eq:divform}) holds in the strong sense in $\R^n$ for all $t>0$ (and in the sense of distributions in $\R_{+}^{1+n}$).  Hence, the two notions of solvability are not \textit{a priori}  equivalent. That the solutions are the same  follows indeed from the solution of the Kato square root problem for $L$: 
$\dom(L^{1/2})=W^{1,2}(\R^n, C^{nm})$ with $\|L^{1/2}f\|_{2}\sim \|\nabla_{x}f\|_{2}$. See 
\cite{AKMc} where this is explicitly proved when $A_{00}\ne I$.

\end{rem}

The following result is Corollary 3.4 of  \cite{AAM} (which, as we recall, furnishes a different proof of results obtained by combining \cite{JK1} and \cite{DJK} in the case of real symmetric matrices equations ($m=1$)).

\begin{thm}   \label{cor:DJK}
Let $A \in L_\infty(\R^n;\mL(\C^{(1+n)m}))$ be a $t$-independent, complex matrix function
which is strictly accretive on $\nul(\curl_\ta)$ and 
assume that 
(Dir-$A$) is well-posed.
Then any function $U_t(x) =U(t,x)\in C^1(\R_+; L_2(\R^n;\C^m))$ solving (\ref{eq:divform}),
with properties as in Definition~\ref{defn:wellposed}, has estimates
$$
  \int_{\R^n}|u|^2 dx\approx\sup_{t>0}\int_{\R^n}|U_t|^2 dx\approx \int_{\R^n}  | \wt N_*(U)|^2 dx 
  \approx \tb{t\nabla_{t,x} U}^2,
$$
where $u=U|_{\R^n}$.
If furthermore $A$ is real (not necessarily symmetric) and $m=1$, then 
Moser's local boundedness  estimate \cite{Mos} gives the pointwise estimate
$\wt N_*(U)(x) \approx N_*(U)(x)$, where the standard non-tangential maximal function is
$N_*(U)(x):= \sup_{|y-x|<c t}|U(t,y)|$,
for fixed $0<c<\infty$.
\end{thm}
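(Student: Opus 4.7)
The plan is to reduce \eqref{eq:divform} to a first-order evolution equation of Cauchy--Riemann type, following the framework of \cite{AAM,AAH,AKM}. One associates to $U$ a $\C^{(1+n)m}$-valued field $F=F(t,x)$, an algebraic conjugate of $\nabla_{t,x}U$, solving
\begin{equation*}
\partial_t F + DB F = 0, \qquad t > 0,
\end{equation*}
where $D$ is a self-adjoint first-order Dirac-type operator encoding $\curl_x F_\ta = 0$ together with a divergence constraint, and $B$ is a bounded multiplication operator built pointwise from $A$. Under the hypothesis that (Dir-$A$) is well-posed, the trace map $F_0 \mapsto u$, which extracts (essentially) the normal component of $F_0$, is an isomorphism from the positive spectral subspace $\mH_+ := P_+ L_2(\R^n;\C^{(1+n)m})$ of $DB$ onto $L_2(\R^n;\C^m)$, and the solution reads $F_t = e^{-tDB} F_0$, a bounded $C_0$-semigroup on $\mH_+$.

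Once this reduction is in place, the first equivalence $\|u\|_2 \approx \sup_{t>0}\|U_t\|_2$ is immediate from the uniform semigroup bound combined with that isomorphism. The core of the theorem is the square-function estimate $\tb{t\nabla_{t,x}U}^2 \approx \|F_0\|_2^2$. Since $t\partial_t F_t = -tDB\,e^{-tDB}F_0$, and $B$ is pointwise bounded and bounded below on $\mH_+$ modulo the constraints, this reduces to the quadratic estimate
\begin{equation*}
\int_0^\infty \|tDB\, e^{-tDB} h\|_2^2\,\frac{dt}{t} \approx \|h\|_2^2,\qquad h \in \mH_+,
\end{equation*}
equivalently, the bounded holomorphic functional calculus of $DB$ on $\mH_+$. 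This is the deep ingredient and the main obstacle: it generalizes the Kato square root theorem and is the content of the central theorems of \cite{AKM,AAM}.

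For the non-tangential maximal estimate $\|\wt N_*(U)\|_2 \approx \|u\|_2$, the upper bound follows from the off-diagonal $L^2$ bounds on resolvents of $DB$ proved in \cite{AAM} which, applied to the semigroup $e^{-tDB}$, yield Whitney-average control of $F_t$ and hence $\|\wt N_*(F)\|_2 \lesssim \|F_0\|_2$; the reverse inequality is immediate since $|u(x)| \le \wt N_*(U)(x)$ a.e.\ by the non-tangential convergence $U_t \to u$. Finally, for real scalar equations, Moser's local boundedness \cite{Mos} gives $\sup_{(s,z)\in B} |U(s,z)| \lesssim \big(\tfrac{1}{|2B|}\int_{2B}|U|^2\big)^{1/2}$ on any Whitney ball $B = B((t,y); t/4) \subset \R^{1+n}_+$, and this yields the pointwise comparison $\wt N_*(U)(x) \approx N_*(U)(x)$ after suitable aperture adjustments.
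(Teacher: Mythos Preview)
The paper does not prove this result but quotes it as Corollary~3.4 of \cite{AAM}; your outline is precisely the argument of that reference (first-order reduction, semigroup representation on the positive spectral subspace, quadratic estimates for $DB$ giving the square-function equivalence, off-diagonal bounds for the $\wt N_*$ control, Moser for the last part), and it is fully consistent with the framework used throughout the present paper, the only cosmetic difference being that here one works with $T_A=\oA^{-1}D\uA$, which is similar to $DB$ via $T_A=\oA^{-1}DB\,\oA$ with $B=\uA\,\oA^{-1}$.

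One small slip: for the lower bound $\|u\|_2\lesssim\|\wt N_*(U)\|_2$ you appeal to pointwise non-tangential convergence, but Definition~\ref{defn:wellposed} only asserts $U_t\to u$ in $L_2$; argue instead via $\|u\|_2=\lim_{t\to 0}\|U_t\|_2\le\sup_t\|U_t\|_2\lesssim\|\wt N_*(U)\|_2$, the last inequality coming from a straightforward averaging over the Whitney regions defining $\wt N_*$.
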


We use the square-function norm 
$$\tb{F_t}^2:= \int_0^\infty \|F_t\|_2^2\, \frac{dt}{t}= \iint_{\R^{1+n}_+} |F(t,x)|^2  \, \frac{dtdx}{t}
$$
and  the following version  $\widetilde N_* (F)$ of the modified {\em non-tangential maximal function} introduced in \cite{KP1}
$$
\widetilde N_*(F)(x):= \sup_{t>0}  t^{-(1+n)/2} \|F\|_{L_2(Q(t,x))},
$$
where $Q(t,x):= [(1-c_0)t,(1+c_0)t]\times B(x;c_1t)$,
for some fixed constants $c_0\in(0,1)$, $c_1>0$.

Next is Theorem 3.2 of \cite{AAM}, specialized to the Dirichlet problem.

\begin{thm}   \label{thm:bvpfordivform}
  The set 
 of matrices $A$ for which (Dir-$A$) is well-posed is an open
  subset of $L_\infty(\R^n; \mL(\C^{(1+n)m}))$.
  Furthermore, it contains
\begin{itemize}
\item[{\rm (i)}] all Hermitean matrices $A(x)= A(x)^*$ (and in particular all real symmetric matrices),
\item[{\rm (ii)}] all block matrices where $A_{0,i}^{\alpha,\beta}(x)=0=A_{i,0}^{\alpha,\beta}(x)$, $1\le i\le n, 1\le \alpha, \beta\le m$, and
\item[{\rm (iii)}] all constant matrices $A(x)=A$.
\end{itemize}
\end{thm}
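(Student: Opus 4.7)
The plan is to translate well-posedness of (Dir-$A$) into a first-order spectral statement, where openness and the three concrete cases become tractable. Following the framework of \cite{AAH,AAM,elAAM}, I would rewrite the second-order system \eqref{eq:divform} as a first-order Cauchy problem on $L_2(\R^n;\C^{(1+n)m})$ of the form $\pd_t F + DB F = 0$, where $D$ is a fixed self-adjoint first-order operator built from $\nabla_x$ and the projection onto $\nul(\curl_\ta)$, and $B=B(A)$ is a bounded multiplication operator obtained by pivoting with the normal block $A_{0,0}$. The conormal gradient $F_t=(\,(A\nabla_{t,x}U)_0,\nabla_x U_t\,)$ of any solution $U$ of \eqref{eq:divform} satisfies this first-order equation, and the accretivity hypothesis \eqref{eq:accrassumption} translates into $B$ being strictly accretive on the closed range of $D$.

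With this reformulation, $DB$ is a bisectorial operator on $L_2$ admitting a bounded $H^\infty$ functional calculus, so we have the spectral splitting $\overline{\ran D}=E^+_{DB}L_2\oplus E^-_{DB}L_2$ into the generalized Hardy subspaces with spectrum in the right/left open half-planes. As shown in \cite{AAM}, (Dir-$A$) is well-posed precisely when the projection $E^+_{DB}L_2\to L_2(\R^n;\C^m)$, ${\bf v}\mapsto {\bf v}_0$, onto the normal part of the trace is an isomorphism; indeed the solution is then given by $F_t=e^{-tDB}(E^+_{DB}\text{-extension of }u)$ and $U_t$ is recovered by integrating the tangential part. Openness now follows at once: the deep quadratic estimate for $DB$ proved in \cite{AAM,AAH,elAAM} yields that $B\mapsto \chi^+(DB)$, and hence the projection under consideration, depends Lipschitz-continuously on $B$ in operator norm. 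Since the set of isomorphisms is open in the operator norm and the map $A\mapsto B(A)$ is locally Lipschitz on $L_\infty$ away from degeneracy of $A_{0,0}$, the set of well-posed $A$ is open.

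For the three concrete families I would argue case by case. Case (iii) is handled by Fourier analysis: for constant $A$, the operator $DB$ becomes a matrix-valued multiplier whose transversality on the normal component can be checked algebraically from strict accretivity. Case (ii) is the block case: when $A_{0,i}=A_{i,0}=0$ the first-order system decouples and the required transversality is equivalent to the Kato domain identity $\dom(L^{1/2})=W^{1,2}(\R^n;\C^{nm})$ proved in \cite{AKMc}, which furnishes the solution $V_t=e^{-tL^{1/2}}u_0$ from the Remark above. Case (i), Hermitean $A=A^*$, relies on a Rellich-type integration by parts: for a compactly supported smooth solution one obtains from $A=A^*$ and \eqref{eq:accrassumption} the identity $\re( (A\nabla_{t,x}U)_0,\pd_tU)|_{t=0}=\tfrac12\divv_x \re ( (A\nabla_{t,x}U)_\ta\,\overline{U})$, which after integration gives a two-sided comparison $\|(A\nabla_{t,x}U)_0\|_2 \approx \|\nabla_x U_0\|_2$ at $t=0$. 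Translated to the first-order picture, this Rellich estimate is exactly the transversality of $E^+_{DB}L_2$ with the tangential trace, hence of the normal projection above.

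The main obstacle is the Lipschitz continuity of the spectral projection $B\mapsto \chi^+(DB)$ used in the openness step; it is not automatic from boundedness of the functional calculus and requires the uniform quadratic estimate for $DB$ over a neighborhood of the given $B$ in $L_\infty$. I would invoke this directly from \cite{AAM,elAAM} rather than redo it, since the estimate there is built precisely to survive small multiplicative perturbations, and its proof follows the Kato-type machinery of \cite{AKMc}. The three special cases (i)--(iii) then serve as seeds from which openness propagates the well-posedness class to perturbations recovering the matrices considered by Hofmann.
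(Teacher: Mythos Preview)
The paper itself gives no proof of this theorem: it is simply quoted as Theorem 3.2 of \cite{AAM}, specialized to the Dirichlet problem. Your sketch is precisely the strategy carried out in \cite{AAM} (and \cite{AAH}): reduce to the first-order system, invoke the bounded $H^\infty$ calculus for $DB$ (equivalently $T_A$), characterize (Dir-$A$) by invertibility of the normal-trace map on the positive Hardy space (this is Proposition~\ref{prop}(2) here), and obtain openness from continuity of $B\mapsto \chi^+(DB)$. So your route coincides with the intended one.

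One point deserves tightening. In case (i) you write that the Rellich identity ``is exactly the transversality of $E^+_{DB}L_2$ with the tangential trace, hence of the normal projection above.'' The Rellich estimate only yields the two-sided \emph{a priori} bound $\|{\bf f}_\ta\|_2\approx\|(A{\bf f})_0\|_2$ for ${\bf f}\in\overline{\ran(\chi_+(T_A))}$; by itself this is injectivity with closed range, not surjectivity. To upgrade it to an isomorphism one needs the method of continuity: deform $A$ along a path of Hermitean (hence Rellich-admissible) matrices to a model case (say constant or block) where invertibility is known, and use the uniform Rellich bound together with the Lipschitz dependence of $\chi^+(DB)$ on $B$ to propagate invertibility along the path. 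The paper alludes to exactly this at the end of Section~\ref{domain} (``The only known way to prove it in such a generality \ldots\ is via a continuity method and the Rellich estimates''). With that addition your argument for (i) is complete; cases (ii) and (iii) are as you say.
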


More importantly is the solution
algorithm using an
``infinitesimal generator'' $T_A$.
Write ${\bf v}\in \C^{(1+n)m}$ as ${\bf v} = [{\bf v}_0, {\bf v}_\ta]^t$, where ${\bf v}_0\in\C^m$ and ${\bf v}_\ta\in\C^{nm}$, and
introduce the auxiliary matrices 
$$
  \oA:=
  \begin{bmatrix}
     A_{00} & A_{0\ta} \\ 0 & I
  \end{bmatrix}, \quad
  \uA:=
  \begin{bmatrix}
     1 & 0 \\ A_{\ta 0} & A_{\ta\ta} 
  \end{bmatrix},
  \qquad\text{if }
  A=
  \begin{bmatrix}
     A_{00} & A_{0\ta} \\ A_{\ta 0} & A_{\ta\ta} 
  \end{bmatrix}
$$
in the normal/tangential splitting of $\C^{(1+n)m}$.
The strict accretivity of  $A$  on $\nul(\curl_\ta)$, as in (\ref{eq:accrassumption}),
implies the pointwise strict accretivity   of the diagonal block $A_{00}$.
Hence $A_{00}$  is invertible, and consequently $\oA$ is invertible [This is not necessarily true for $\uA$.] We define $$T_{A}=\oA^{-1} D \uA$$ as an unbounded operator on $L_{2}(\R^n, \C^{(1+n)m})$ with $D$ the first order self-adjoint operator given in the normal/tangential splitting by
$$
D= \begin{bmatrix}
     0 & \divv_{x} \\ -\nabla_{x} & 0
  \end{bmatrix}.
$$

\begin{prop} \label{prop} Let $A \in L_\infty(\R^n;\mL(\C^{(1+n)m}))$ be a $t$-independent, complex matrix function
which is strictly accretive on $\nul(\curl_\ta)$.  
\begin{enumerate}
\item
 The operator $T_{A}$ has quadratic estimates and a bounded holomorphic functional calculus on $L_{2}(\R^n, \C^{(1+n)m})$. In particular, for any  holomorphic function $\psi$ on the left and right open half planes, with $z\psi(z)$ and $z^{-1}\psi(z)$ qualitatively bounded, one has
$$
\tb{\psi(tT_{A}){\bf f}} \lesssim \|{\bf f}\|_{2}.
$$
\item The Dirichlet problem (Dir-$A$) is well-posed if and only if the operator $$\mS: \ol{\ran(\chi_{+}(T_{A}))} \to  L_{2}(\R^n, \C^{m}), {\bf f} \mapsto {\bf f}_{0}$$ is invertible.  Here, $\chi_{+}=1$ on the right open half plane and 0 on the left  open half plane.
\end{enumerate}
\end{prop}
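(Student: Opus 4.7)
The plan is to reduce both parts to results already available in \cite{AAM} (and its predecessors \cite{AKMc,AAH}).

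\medskip

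For part (1), I would exploit the similarity
$$
\oA \, T_A \, \oA^{-1} = D\uA\oA^{-1} = DB, \qquad B := \uA\oA^{-1} \in L_\infty(\R^n;\mL(\C^{(1+n)m})),
$$
which converts $T_A$ into a perturbed Dirac operator of exactly the type studied in \cite{AAM,AKMc}. A direct algebraic computation in the normal/tangential splitting shows that strict accretivity of $A$ on $\nul(\curl_\ta)$ is equivalent to strict accretivity of $B$ on $\ol{\ran(D)}$ (using that by Helmholtz decomposition $\ol{\ran(D)}=\nul(\curl_\ta)$). The main Kato-type theorem of \cite{AAM} then yields quadratic estimates for $DB$, which transfer to $T_A$ by the similarity (since $\oA^{\pm 1}$ are bounded multiplication operators), hence a bounded holomorphic functional calculus for $T_A$ on $L_2(\R^n;\C^{(1+n)m})$. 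The stated bound $\tb{\psi(tT_A){\bf f}}\lesssim \|{\bf f}\|_2$ for any admissible $\psi$ is McIntosh's standard reformulation of quadratic estimates, and it transfers across the whole family of admissible $\psi$ via a Cauchy-integral representation of $\psi(tT_A)$ in terms of the resolvent.

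\medskip

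For part (2), the plan is to use the first-order reformulation from \cite{AAM}: if $U$ satisfies Definition~\ref{defn:wellposed}, then its conormal gradient
$$
F_t := \begin{bmatrix} (A\nabla_{t,x}U_t)_0 \\ \nabla_x U_t \end{bmatrix}
$$
takes values pointwise in $\nul(\curl_\ta)$ and solves the first-order ODE $\pd_t F_t + T_A F_t = 0$ in $L_2$; conversely, integrating the normal component of such an $F$ reconstructs $U$. By part (1) and the bisectoriality of $T_A$, the $L_2$-valued solutions of this ODE that obey the decay and integrability conditions of Definition~\ref{defn:wellposed} at $t\to +\infty$ are precisely the curves $F_t = e^{-tT_A}{\bf f}$ with ${\bf f}\in \ol{\ran(\chi_+(T_A))}$, and one checks that the trace $\lim_{t\to 0} U_t$ equals ${\bf f}_0$. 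Well-posedness of (Dir-$A$) is thus exactly bijectivity of the normal-part map $\mS:{\bf f}\mapsto{\bf f}_0$ from $\ol{\ran(\chi_+(T_A))}$ onto $L_2(\R^n;\C^m)$.

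\medskip

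The substantive obstacle lies in part (1): the quadratic estimate for $DB$ rests on a $T(b)$-type argument, which is precisely what is carried out in \cite{AAM,AKMc}; here we only need to import those results. Part (2) is then a bookkeeping exercise that matches solutions of \eqref{eq:divform} in the sense of Definition~\ref{defn:wellposed} with elements of the spectral subspace $\ol{\ran(\chi_+(T_A))}$ through the functional calculus supplied by part (1).
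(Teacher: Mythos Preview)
Your part (1) is exactly what the paper invokes (it merely cites \cite[Corollary~3.6]{AAM} and \cite{elAAM}): the similarity $\oA\, T_A\, \oA^{-1} = DB$ with $B=\uA\oA^{-1}$ strictly accretive on $\ol{\ran(D)}=\nul(\curl_\ta)$ transfers quadratic estimates and the bounded holomorphic functional calculus from $DB$ to $T_A$.

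In part (2) your plan is the right one, but the bookkeeping slips in two places. First, the conormal gradient $G_t := \big[(A\nabla_{t,x}U_t)_0,\ \nabla_x U_t\big]^t = \oA\,\nabla_{t,x}U_t$ solves $\pd_t G_t + DB\,G_t = 0$, not the $T_A$-equation; it is the \emph{full} gradient $\nabla_{t,x}U_t$ that satisfies $\pd_t F_t + T_A F_t = 0$. Second, and more seriously, the boundary value of either gradient has normal component $\pd_t U|_{t=0}$ (respectively the conormal derivative), \emph{not} the Dirichlet datum $u_0$; so your claim ``$\lim_{t\to 0}U_t={\bf f}_0$'' does not follow from that choice of $F$. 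The object the paper (following \cite[Section~4]{AAM}) actually uses is the \emph{potential vector} ${\bf F}_t=e^{-tT_A}{\bf f}$ of Lemma~\ref{lem:solution}, sitting one antiderivative above the gradient: it satisfies $({\bf F}_t)_0=U_t$ and $\pd_t{\bf F}_t=\nabla_{t,x}U_t$. For this ${\bf F}$ one does have $\lim_{t\to 0}U_t={\bf f}_0$, and the equivalence of well-posedness with invertibility of $\mS$ then follows exactly as you describe. Replace the conormal gradient by this potential vector and your sketch matches the argument the paper cites.
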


Item (1) is  \cite[Corollary 3.6]{AAM} (and see \cite{elAAM} for an explicit direct proof)  and item (2) can be found in \cite[Section 4, proof of Theorem 2.2]{AAM}.

\begin{lem}\label{lem:solution} Assume that (Dir-$A$) is well-posed. Let $u_{0}\in L_{2}(\R^n, \C^m)$. Then the solution  $U$ of (Dir-$A$) in the sense of Definition \ref{defn:wellposed} is given by
$$
U(t,\cdot)= (e^{-tT_{A}}{\bf f})_{0}, \quad {\bf f}=\mS^{-1}u_{0} \in \ol{\ran(\chi_{+}(T_{A}))}$$
and furthermore
$$
 \nabla_{t,x}U(t, \cdot)= \partial_{t}  e^{-tT_{A}}{\bf f}.
$$
 \end{lem}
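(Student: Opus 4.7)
Set $F(t):=e^{-tT_A}{\bf f}$ with ${\bf f}:=\mS^{-1}u_0\in E_+:=\overline{\ran\chi_+(T_A)}$, and $U(t,\cdot):=F_0(t,\cdot)$. The strategy is to verify that this $U$ meets every item of Definition \ref{defn:wellposed} for the data $u_0$; uniqueness, which is part of well-posedness, then identifies it with the Dirichlet solution, and the identity $\nabla_{t,x}U=\partial_t e^{-tT_A}{\bf f}$ falls out of the computation along the way.

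\textbf{The first-order equation.} By Proposition \ref{prop}(1), $T_A$ is bisectorial on $L_2$ with bounded $H^\infty$ calculus, so its restriction to $E_+$ is sectorial and generates a bounded analytic $C_0$-semigroup. Hence $F\in C^\infty(\R_+,L_2)\cap C^0([0,\infty),L_2)$, $F(0)={\bf f}$, and $\oA\partial_tF+D\uA F=0$ for $t>0$. In the normal/tangential splitting this reads
\[
  \partial_t F_\ta=\nabla_x F_0,\qquad A_{00}\partial_tF_0+A_{0\ta}\partial_tF_\ta=-\divv_x(A_{\ta 0}F_0+A_{\ta\ta}F_\ta).
\]
Combined with $F_0=U$, the first identity gives $\partial_t F=[\partial_t U,\nabla_xU]^t=\nabla_{t,x}U$, which is the second claim of the lemma and also yields $U\in C^1(\R_+,L_2)$, $\nabla_xU\in C^0(\R_+,L_2)$. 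Since $A$ is $t$-independent, $A\partial_tF=\partial_t(AF)$, so the second identity reads $(A\nabla_{t,x}U_t)_0=-\divv_x(AF)_\ta(t)$; pairing with $v\in C_0^\infty(\R^n,\C^m)$, integrating by parts in $x$, and writing $(AF)_\ta(t)=-\int_t^\infty\partial_s(AF)_\ta\,ds=-\int_t^\infty(A\nabla_{s,x}U_s)_\ta\,ds$ (which uses $F(s)\to 0$ in $L_2$ at infinity) produces the weak equation demanded by Definition \ref{defn:wellposed}.

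\textbf{Limits and main obstacle.} Strong continuity of the semigroup at $0^+$ gives $U_t=F_0(t)\to{\bf f}_0=\mS{\bf f}=u_0$ in $L_2$. The sectorial bound $\|tT_Ae^{-tT_A}\|_{L_2\to L_2}\lesssim 1$ yields $\|\nabla_{t,x}U_t\|_2=\|T_AF(t)\|_2\lesssim\|{\bf f}\|_2/t\to 0$. The decay $\|F(t)\|_2\to 0$ at infinity is the one nonroutine point: I would use that $T_A$ has no zero eigenvalue on $E_+$ (a structural feature of the spectral projection built into Proposition \ref{prop}(1)), so that $\ran T_A$ is dense in $E_+$; on this dense subspace one has $\|e^{-tT_A}T_Ah\|_2=\|T_Ae^{-tT_A}h\|_2\lesssim\|h\|_2/t\to 0$, and uniform boundedness $\|e^{-tT_A}\|_{E_+\to E_+}\lesssim 1$ then extends the decay to all ${\bf f}\in E_+$ by a $3\varepsilon$-argument. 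Finally, $\int_{t_0}^{t_1}\nabla_xU_s\,ds=F_\ta(t_1)-F_\ta(t_0)\to -{\bf f}_\ta$ in $L_2$ as $t_0\to 0$ and $t_1\to\infty$, which is exactly the remaining convergence required by Definition \ref{defn:wellposed}.
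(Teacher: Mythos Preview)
The paper does not give a proof here; it simply cites \cite[Lemma 4.2]{AAM} and \cite[Lemma 2.55]{AAH}. Your direct verification is correct and is essentially the argument one finds in those references: write the first-order system $\oA\,\partial_tF+D\uA F=0$ for $F(t)=e^{-tT_A}{\bf f}$, read off $\partial_tF=\nabla_{t,x}U$ and the weak form of \eqref{eq:divform} from its two rows, check the limits via the analytic-semigroup bounds on $E_+$, and invoke uniqueness from well-posedness. The point you single out as the ``main obstacle'', namely $\|e^{-tT_A}{\bf f}\|_2\to 0$, is handled correctly by your density argument (since $E_+\subset\overline{\ran T_A}$, the restriction $T_A|_{E_+}$ is injective, hence has dense range in $E_+$, and the $3\varepsilon$-argument applies).
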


\begin{proof}  \cite[Lemma 4.2]{AAM} (See also \cite[Lemma 2.55]{AAH} with a slightly different  formulation of the Dirichlet problem).
\end{proof}

\section{The bilinear estimate}

We are now in position to state  and prove the generalisation of Hofmann's result. 

\begin{thm}  Assume that (Dir-$A$) is well-posed. Let $u_{0}\in L_{2}(\R^n, \C^m)$ and $U$ be the solution to (Dir-$A$) in the sense of Definition \ref{defn:wellposed}. Then for all ${\bf v}\colon \R^{1+n}_+ \to \C^{(1+n)m}$ such that the right-hand side is finite, 
$$
\left|\iint_{\R^{1+n}_+} \nabla_{t,x} U \cdot \ol{\bf v} \, dtdx \right| \le C \|u_{0}\|_{2}( \tb{t\nabla_{t,x} {\bf v}} + \|N_{*}{\bf v}\|_{2}).
$$

\end{thm}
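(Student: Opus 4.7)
The plan is to invoke the functional-calculus representation provided by Lemma~\ref{lem:solution} to reduce matters to the semigroup $e^{-tT_{A}}$, then split the test field ${\bf v}$ into an oscillatory part handled by the square function of $\partial_{t}F_{t}$ and a smooth part handled by a paraproduct argument in the spirit of Coifman--Meyer. First, by Lemma~\ref{lem:solution}, setting ${\bf f} := \mS^{-1}u_{0}$ gives $\nabla_{t,x}U(t,\cdot) = \partial_{t}F_{t}$ with $F_{t} := e^{-tT_{A}}{\bf f}$; well-posedness of (Dir-$A$) together with Proposition~\ref{prop}(2) then yield $\|{\bf f}\|_{2} \lesssim \|u_{0}\|_{2}$. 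Since $\psi(z) := -ze^{-z}$ is bounded holomorphic on both open half-planes with $\psi(0) = 0 = \psi(\infty)$, Proposition~\ref{prop}(1) delivers the crucial square function bound $\tb{t\partial_{t}F_{t}} = \tb{\psi(tT_{A}){\bf f}} \lesssim \|u_{0}\|_{2}$.

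Next, I would fix a radial Schwartz bump $\phi$ with $\int\phi = 1$, set $\phi_{t}(x) := t^{-n}\phi(x/t)$ and $P_{t}g := \phi_{t}*g$, and split
$$
\iint_{\R^{1+n}_+} \partial_{t}F_{t}\cdot \ol{{\bf v}}\,dtdx = \iint \partial_{t}F_{t}\cdot\ol{(I-P_{t}){\bf v}}\,dtdx + \iint \partial_{t}F_{t}\cdot\ol{P_{t}{\bf v}}\,dtdx,
$$
where $P_{t}{\bf v}$ acts slicewise in $x$. For the first integral, the cancellation $\int\phi = 1$ yields the pointwise bound $|(I-P_{t}){\bf v}(t,x)| \lesssim t\,M(|\nabla_{x}{\bf v}(t,\cdot)|)(x)$, with $M$ the Hardy--Littlewood maximal operator; Cauchy--Schwarz in the measure $\tfrac{dtdx}{t}$ and $L^{2}$-boundedness of $M$ then give
$$
\left|\iint \partial_{t}F_{t}\cdot\ol{(I-P_{t}){\bf v}}\,dtdx\right| \lesssim \tb{t\partial_{t}F_{t}}\cdot \tb{tM|\nabla_{x}{\bf v}|} \lesssim \|u_{0}\|_{2}\,\tb{t\nabla_{t,x}{\bf v}}.
$$

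For the second integral I would apply the Coifman--Meyer $P_{t}-Q_{t}$ device. Note $\partial_{t}P_{t} = -t^{-1}Q_{t}$ where $Q_{t}g := \widetilde\phi_{t}*g$ with $\widetilde\phi(y) := \divv_{y}(y\phi(y))$ of zero integral. Integration by parts in $t$, using $F_{\infty} = 0$ and $F_{0} = {\bf f}$, produces a boundary term at $t \to 0$ equal to $-\int g_{0}(x)\ol{{\bf f}(x)}\,dx$ with $g_{0}(x) := \lim_{t\to 0}P_{t}{\bf v}(t,x)$; since $|g_{0}|\le N_{*}{\bf v}$ pointwise, this piece is controlled by $\|u_{0}\|_{2}\,\|N_{*}{\bf v}\|_{2}$. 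The interior contribution, after expanding $\partial_{t}(P_{t}{\bf v}(t,\cdot)) = -t^{-1}Q_{t}{\bf v}(t,\cdot) + P_{t}\partial_{t}{\bf v}(t,\cdot)$ and performing a Fubini interchange that exploits $\int_{s}^{\infty}\partial_{t}F_{t}\,dt = -F_{s}$, collapses to bilinear pairings of $F_{s}$ with $Q_{s}{\bf v}(s,\cdot)$ and with $P_{s}\partial_{s}{\bf v}(s,\cdot)$; these are bounded by combining Proposition~\ref{prop}(1), the pointwise estimate $|Q_{s}{\bf v}(s,x)| \lesssim sM|\nabla_{x}{\bf v}(s,\cdot)|(x)$, and a Carleson-type embedding against $N_{*}{\bf v}$.

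The main obstacle is the last step: $F_{s}$ has only the uniform bound $\|F_{s}\|_{2} \le \|{\bf f}\|_{2}$ and no global square function of its own, so one cannot naively close the paraproduct by Cauchy--Schwarz. The $P_{t}-Q_{t}$ machinery circumvents this by transferring cancellation onto the test-function side via the mean-zero kernel $\widetilde\phi$, making the useful estimate $\tb{t\partial_{t}F_{t}}$ from the first step available; it is precisely here that the full bounded holomorphic functional calculus of $T_{A}$ from Proposition~\ref{prop}(1), beyond the single symbol $\psi(z) = -ze^{-z}$, is essential in order to treat the auxiliary multipliers arising in the Fubini reduction.
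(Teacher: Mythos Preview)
Your first reduction (splitting off $(I-P_{t}){\bf v}$ and using $\tb{t\partial_{t}F_{t}}\lesssim\|u_{0}\|_{2}$) is fine, but the treatment of $\iint \partial_{t}F_{t}\cdot\overline{P_{t}{\bf v}}\,dtdx$ has a real gap that you yourself flag in the last paragraph. After your integration by parts you are left with $\iint F_{t}\cdot\overline{t^{-1}Q_{t}{\bf v}_{t}}\,dtdx$ and $\iint F_{t}\cdot\overline{P_{t}\partial_{t}{\bf v}_{t}}\,dtdx$, and neither can be closed: $F_{t}$ has no square function (indeed $\tb{F_{t}}=\infty$ since $F_{t}\to{\bf f}\ne 0$), while your convolution operators $P_{t},Q_{t}$ carry no cancellation tied to $T_{A}$, so there is no Carleson measure available on that side either. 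The Fubini device you allude to only rewrites $F_{t}=-\int_{t}^{\infty}\partial_{s}F_{s}\,ds$ and brings you back to where you started; the ``auxiliary multipliers'' are never produced. A secondary issue: the boundary term at $t\to 0$ presupposes that $\lim_{t\to 0}P_{t}{\bf v}(t,\cdot)$ exists, which the hypotheses $\|N_{*}{\bf v}\|_{2}+\tb{t\nabla_{t,x}{\bf v}}<\infty$ do not guarantee.

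The paper avoids all of this by a different integration by parts at the outset, writing $\partial_{t}F=\partial_{t}(t\partial_{t}F)-t\partial_{t}^{2}F$ so that $\iint\partial_{t}F\cdot\overline{{\bf v}}=-\iint t\partial_{t}F\cdot\overline{\partial_{t}{\bf v}}-\iint t\partial_{t}^{2}F\cdot\overline{{\bf v}}$, with no boundary term since $t\partial_{t}F\to 0$ in $L_{2}$ at $0$ and $\infty$. The first integral is immediate by Cauchy--Schwarz. For the second, the decisive move---missing from your sketch---is to exploit the factorisation $T_{A}=\oA^{-1}DB\oA$ to write $t^{2}\partial_{t}^{2}F=\oA^{-1}(tDB)(I+(tDB)^{2})^{-1}\oA\,\psi(tT_{A}){\bf f}$ and pass the resolvent factor to the ${\bf v}$-side by duality, yielding $Q_{t}=\Theta_{t}(\oA^{-1})^{*}$ with $\Theta_{t}=(tB^{*}D)(I+(tB^{*}D)^{2})^{-1}$. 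This $Q_{t}$ is \emph{not} a convolution: it inherits off-diagonal decay and, crucially, its principal part $\gamma_{t}(x){\bf w}:=(Q_{t}{\bf w})(x)$ satisfies a Carleson condition on $|\gamma_{t}(x)|^{2}\,\tfrac{dtdx}{t}$ as a consequence of the quadratic estimates for $B^{*}D$. It is this Carleson measure, applied to $S_{t}P_{t}{\bf v}_{t}$, that produces the $\|N_{*}{\bf v}\|_{2}$ term; a convolution $Q_{t}$ cannot manufacture it.
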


The pointwise values of ${\bf v}(t,x)$ in the non-tangential control $N_{*}{\bf v}$ can be slightly improved to $L^1$ averages on balls having radii $\sim t$ for each fixed $t$. See the end of proof. 

\begin{proof}  It follows from the previous result  that there exists ${\bf f}\in \ol{\ran(\chi_{+}(T_{A}))}$ such that  $U(t,\cdot)= (e^{-tT_{A}}{\bf f})_{0}$ and 
$$
 \nabla_{t,x}U(t, \cdot)= \partial_{t}{\bf F} = -T_{A}e^{-tT_{A}}{\bf f}, \quad {\bf F}=e^{-tT_{A}}{\bf f}.
$$
Integrating by parts with respect to $t$, we find
$$
\iint_{\R^{1+n}_+} \nabla U \cdot \ol{\bf v} \, dtdx= -\iint_{\R^{1+n}_+} t \partial_{t}{\bf F} \cdot \ol{\partial_{t}{\bf v}} \, dtdx - \iint_{\R^{1+n}_+} t\partial_{t}^2{\bf F} \cdot \ol{\bf v} \, {dtdx}.
$$
The boundary term vanishes because $t\partial_{t}{\bf F}$ goes to $0$ in $L_{2}$ when $t\to 0, \infty$ (this uses  ${\bf f}\in \ol{\ran(\chi_{+}(T_{A}))}$) and $\sup_{t>0}\|{\bf v}(t, \cdot)\|_{2}<\infty$ from $\|N_{*}{\bf v}\|_{2}<\infty$.

For the first term, we use  Cauchy-Schwarz inequality and that $\tb{t\partial_{t}{\bf F}} \lesssim \|u_{0}\|_{2}$ from Theorem \ref{cor:DJK}.

For the second term, we use the following identity: $T_{A}= \oA^{-1} DB
\oA$ with $B=\uA\oA^{-1}$ which, by \cite[Proposition 3.2]{AAM},  is strictly accretive on $\nul(\curl_{\ta})$,  and observe that 
\begin{align*}
t^2\partial_{t}^2{\bf F}&= \oA^{-1}  (tDB)^2e^{-tDB}(\oA {\bf f})
\\
 &= \oA^{-1} (tDB)(I+(tDB)^2)^{-1} \psi(tDB) (\oA {\bf f})
 \\
&= \oA^{-1} (tDB)(I+(tDB)^2)^{-1} \oA \psi(tT_{A})({\bf f})
\end{align*}
with 
$$\psi(z)= z(1+z^2)e^{-(\sgn{\re z})z}.
$$
Thus,
$$
\iint_{\R^{1+n}_+} t\partial_{t}^2{\bf F} \cdot \ol{\bf v} \, {dtdx}= 
\iint_{\R^{1+n}_+} \oA \psi(tT_{A}) ({\bf f})  \cdot  \ol{Q_{t}{\bf v}_{t}} \, \frac{dtdx}t
$$
with $Q_{t}=\Theta_{t}{\oA^{-1}}^*$ and $\Theta_{t}= (tB^*D) (I+(tB^*D)^{2})^{-1}
$
acting on $ {\bf v}_{t}\equiv {\bf v(t,\cdot})$ for each fixed $t$ [The notation $\oA$ has nothing to do with complex conjugate and we apologize for any conflict this may cause.]
It follows from the quadratic estimates of Proposition \ref{prop} 
 that
$$
\tb{\psi(tT_{A}) ({\bf f})} \lesssim  \|{\bf f}\|_{2}.
$$
It remains to estimate $\tb{Q_{t}{\bf v}_{t}}$. To do that we follow the principal part approximation of \cite{elAAM} - which is an elaboration of the so-called Coifman-Meyer trick \cite{CM} - applied to $Q_{t}$ instead of $\Theta_{t}$ there. That is,  we write
\begin{equation}\label{dec}
Q_{t}{\bf v}_{t}=Q_{t}\left(\frac{I-P_{t}}{t(-\Delta)^{1/2}}\right) t(-\Delta)^{-1/2}{\bf v}_{t} + (Q_{t}P_{t}-\gamma_{t}S_{t}P_{t}){\bf v}_{t} + \gamma_{t}S_{t}P_{t}{\bf v}_{t}
\end{equation}
where $\Delta
$ is the Laplacian on $\R^n$, $P_{t}$ is a nice scalar approximation to the identity acting componentwise on $L_{2}(\R^n, \C^{(1+n)m})$ and  $\gamma_{t}$ is the element  
of $ L^2_{\text{loc}}(\R^n; \mL(\C^{(1+n)m}))$ given by $$
   \gamma_t(x){\bf w}:= (Q_t {\bf w})(x)
$$
for every ${\bf w}\in \C^{(1+n)m}$. We view ${\bf w}$ on the right-hand side
of the above equation as the constant function valued in $\C^{(1+n)m}$ defined on $\R^n$ 
by ${\bf w}(x):={\bf w}$. 
We identify $\gamma_t(x)$ with the (possibly unbounded) multiplication
operator $\gamma_t: f(x)\mapsto \gamma_t(x)f(x)$. Finally, the {\em dyadic averaging operator} $S_t:
L_{2}(\R^n, \C^{(1+n)m}) \rightarrow L_{2}(\R^n, \C^{(1+n)m})$ is given by 
$$
  S_t {\bf u}(x) :=   \frac{1}{|Q|} \int_Q
  {\bf u}(y)\,  dy
$$
for every $x \in \R^n$ and $t>0$, where  $Q $ is
the unique dyadic cube  in $ \R^n$ that contains $x$ and has side length $\ell$ with $\ell/2 < t \le \ell $. 

With this in hand, we apply the triple bar norm to \eqref{dec}. 

Using the uniform $L_{2}$ boundedness of $Q_{t}$ and that of $\frac{1-P_{t}}{t(-\Delta)^{1/2}}$, the first term in the RHS  is bounded by $\tb{t(-\Delta)^{1/2}{\bf v}_{t} }\le \tb{t\nabla_{x} {\bf v}_{t}}$.

Following exactly the computation of Lemma 3.6 in \cite{elAAM}, the second term in the RHS is bounded by
$C\tb{t\nabla_{x} P_{t} {\bf v}_{t}} \le   C\tb{t\nabla_{x} {\bf v}_{ t}}$ using the uniform $L_{2}$ boundedness of $P_{t}$. This computation makes use of the off-diagonal estimates of $\Theta_{t}$, hence of $Q_{t}$,  proved in \cite[Proposition 3.11]{elAAM}.

For the third term in the RHS, we observe that $\gamma_{t}(x){\bf w}  =\Theta_{t}({\oA^{-1}}^*{\bf w})(x)$. Hence, the square-function estimate on $\Theta_{t}$ proved in \cite[Theorem 1.1]{elAAM}, the  off-diagonal estimates of $\Theta_{t}$  and the fact that $\oA^{-1}$ is bounded imply that $|\gamma_{t}(x)|^2\frac{dtdx}{t}$ is a Carleson measure. 
 Hence, from Carleson embedding theorem  the third term contributes  $\|N_{*}(S_{t}P_{t}{\bf v})\|_{2}$, which is controlled pointwise by the non-tangential maximal function in the statement with appropriate opening.
\end{proof}

\section{The domain of the Dirichlet semi-group}\label{domain}

Assume (Dir-$A$)  in the sense of Definition \ref{defn:wellposed}  is well-posed.  If we set
$$ \mP_{t}u_{0}=(e^{-tT_{A}}{\bf f})_{0}, \quad {\bf f}=\mS^{-1}u_{0} \in \ol{\ran(\chi_{+}(T_{A}))}$$ for all $t>0$, then Lemma \ref{lem:solution} implies that $(\mP_{t})_{t>0}$ is a bounded $C_{0}$-semigroup on $L_{2}(\R^n, \C^m)$ [Recall that well-posedness includes uniqueness and this allows to prove the semigroup property].

Furthermore, with our definition of well-posedness of  the Dirichlet problem, the domain of the infinitesimal generator $\A$ of this semi-group is  contained in the Sobolev space $W^{1,2}(\R^n, \C^m)$ and $ \|\nabla_{x }u_{0}\|_{2} \lesssim \|\A u_{0}\|_{2}$. Indeed, from  Lemma \ref{lem:solution}  we have for all $t>0$, 
$
\partial_{t}  e^{-tT_{A}}{\bf f}= \nabla_{t,x}U(t, \cdot).
$
Also $\partial_{t}  e^{-tT_{A}}{\bf f} \in \ol{\ran(\chi_{+}(T_{A}))}$ and the invertibility of $\mS$ tells that $\nabla_{t,x}U(t, \cdot) = \mS^{-1}(\partial_{t}U(t,\cdot))$. Therefore
$$
 \|\nabla_{x}U(t, \cdot))\|_{2} \lesssim \|\partial_{t}U(t,\cdot)\|_{2}.$$  By definition of $\A$, $\partial_{t}U(t,\cdot) = \A U(t,\cdot)$, thus we have for all $t>0$
$$
 \|\nabla_{x}U(t, \cdot))\|_{2}\lesssim \|\A U(t,\cdot)\|_{2} .$$
The conclusion for the domain follows easily. 

The question of whether this domain coincides with  $W^{1,2}(\R^n, \C^m)$
is answered by the following theorem

\begin{thm} Assume that (Dir-$A$) and (Dir-$A^*$) are well-posed. Then the domain of the infinitesimal generator $\A$ of $(\mP_{t})_{t>0}$ coincides with the Sobolev space $W^{1,2}(\R^n, \C^m)$ and $ \|\nabla_{x }u_{0}\|_{2} \sim \|\A u_{0}\|_{2}$. 
\end{thm}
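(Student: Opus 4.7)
The inclusion $\dom(\A)\subset W^{1,2}(\R^n,\C^m)$ with $\|\nabla_x u_0\|_2\lesssim\|\A u_0\|_2$ is established just before the theorem statement, so I focus on the converse. The first step is to translate the condition $u_0\in\dom(\A)$ into a condition on $T_A$. Lemma~\ref{lem:solution} gives $u_0\in\dom(\A)$ if and only if ${\bf f}:=\mS^{-1}u_0\in\dom(T_A)$, and since $\oA,\oA^{-1}$ are bounded and $T_A=\oA^{-1}D\uA$, this is equivalent to $D\uA{\bf f}\in L_2$. A direct computation gives $(D\uA{\bf f})_\ta=-\nabla_x u_0$ and $(D\uA{\bf f})_0=\divv_x(A_{\ta 0}u_0+A_{\ta\ta}{\bf f}_\ta)$, together with the formula $\A u_0=-A_{00}^{-1}\divv_x(A_{\ta 0}u_0+A_{\ta\ta}{\bf f}_\ta)-A_{00}^{-1}A_{0\ta}\nabla_x u_0$. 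Since $A_{00}^{-1}$ and $A_{0\ta}$ are bounded multipliers and $\nabla_x u_0\in L_2$, the theorem reduces to showing $\divv_x(A_{\ta 0}u_0+A_{\ta\ta}{\bf f}_\ta)\in L_2$ with norm $\lesssim\|\nabla_x u_0\|_2$.

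To prove this I would use duality with the $A^*$-problem. Test the divergence against $\phi\in C_c^\infty(\R^n,\C^m)$; by well-posedness of (Dir-$A^*$), Proposition~\ref{prop} provides ${\bf g}\in\ol{\ran(\chi_+(T_{A^*}))}$ with ${\bf g}_0=\phi$ and $\|{\bf g}\|_2\lesssim\|\phi\|_2$. Distributional integration by parts in the $x$-variable gives
\[
\langle\divv_x(A_{\ta 0}u_0+A_{\ta\ta}{\bf f}_\ta),\phi\rangle=-\langle A_{\ta 0}u_0+A_{\ta\ta}{\bf f}_\ta,\nabla_x\phi\rangle.
\]
The key identity to establish is the Green-type pairing
\[
\langle A_{\ta 0}u_0+A_{\ta\ta}{\bf f}_\ta,\nabla_x\phi\rangle=\langle\nabla_x u_0,A_{0\ta}^*\phi+A_{\ta\ta}^*{\bf g}_\ta\rangle,
\]
equivalent to $\langle\Lambda_A u_0,\phi\rangle=\langle u_0,\Lambda_{A^*}\phi\rangle$ for the two conormal derivatives. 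Granted this, Cauchy--Schwarz and $\|{\bf g}\|_2\lesssim\|\phi\|_2$ yield $|\langle\cdot,\cdot\rangle|\lesssim\|\nabla_x u_0\|_2\|\phi\|_2$, and density of $C_c^\infty$ in $L_2$ concludes the argument.

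The main difficulty is the Green's identity itself. My plan is to let $U(t,\cdot)$ and $V(t,\cdot)$ be the solutions of (Dir-$A$) and (Dir-$A^*$) with data $u_0$ and $\phi$ respectively, to test the PDE $\divv_{t,x}(A\nabla U)=0$ against $\bar V\eta(t)$ and the analogous equation for $V$ against $\bar U\eta(t)$ for a smooth temporal cutoff $\eta$ supported in $[\epsilon,R]$, and to integrate by parts in $\R^{1+n}_+$. Using the pointwise algebraic identity $A\nabla U\cdot\overline{\nabla V}=\overline{A^*\nabla V\cdot\overline{\nabla U}}$, the two interior integrals match and only boundary terms carried by $\eta'$ near $t=\epsilon$ and $t=R$ survive. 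Sending $\epsilon\to 0$ and $R\to\infty$ then produces the desired identity between the $t=0$ traces. Justifying these limits rigorously --- using the $L_2$-convergence of $U_t,V_t$ at $t=0,\infty$ prescribed by Definition~\ref{defn:wellposed} together with the uniform $L_2$ and square-function estimates of Theorem~\ref{cor:DJK} --- is the most delicate analytic step.
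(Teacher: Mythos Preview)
Your reduction is correct: $u_0\in\dom(\A)$ iff ${\bf f}=\mS^{-1}u_0\in\dom(T_A)$, the explicit formula for $\A u_0$ is right, and the task does reduce to the estimate $\|\divv_x(A_{\ta 0}u_0+A_{\ta\ta}{\bf f}_\ta)\|_2\lesssim\|\nabla_x u_0\|_2$. But your route diverges from the paper's. The paper does not prove a Green's identity; instead it invokes the (nontrivial) equivalence from \cite{AAH,AAM} between well-posedness of (Dir-$A^*$) and well-posedness of the \emph{regularity} problem for $A$, which is in turn equivalent to invertibility of $\mR:\ol{\ran(\chi_+(T_A))}\to L_2$, ${\bf f}\mapsto{\bf f}_\ta$. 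Combining this with invertibility of $\mS$ yields $\|{\bf f}_0\|_2\sim\|{\bf f}_\ta\|_2$ on the Hardy space $\ol{\ran(\chi_+(T_A))}$, and the domain identification then follows by applying this equivalence to $\partial_t{\bf F}_t=\nabla_{t,x}U_t\in\ol{\ran(\chi_+(T_A))}$, exactly as in the paragraph preceding the theorem.

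Your Green's identity $\langle\Lambda_A u_0,\phi\rangle=\langle u_0,\Lambda_{A^*}\phi\rangle$ is true and is essentially a restatement of this Dirichlet--Regularity duality, but the integration-by-parts proof you outline has a genuine gap at the boundary limit. After the interior cancellation the surviving $\eta'$-terms pair $A_{\ta 0}U_t+A_{\ta\ta}({\bf F}_t)_\ta$ against $\nabla_x V_t$ and $\nabla_x U_t$ against $(A^*)_{\ta 0}V_t+(A^*)_{\ta\ta}({\bf G}_t)_\ta$. To recover the desired identity at $t=0$ you need $\nabla_x U_t$ or $\nabla_x V_t$ to converge in $L_2$ as $t\to 0$, and neither Definition~\ref{defn:wellposed} nor Theorem~\ref{cor:DJK} gives this: convergence of $\nabla_x V_t$ would require ${\bf g}\in\dom(T_{A^*})$, which does not follow from $\phi\in C_c^\infty$ since $\mS_{A^*}^{-1}$ is only $L_2$-bounded, while convergence of $\nabla_x U_t$ (equivalently ${\bf f}\in\dom(T_A)$) is precisely what you are trying to establish. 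A density argument via $u_0^\epsilon=\mP_\epsilon u_0\in\dom(\A)$ runs into the same wall, since uniform control of $\|\nabla_x\mP_\epsilon u_0\|_2$ is again the regularity problem. Thus completing your argument requires input equivalent to well-posedness of (Reg-$A$), which is exactly what the paper imports as a black box rather than reproving.
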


This theorem applies to the three situations listed in Theorem \ref{thm:bvpfordivform}.

\begin{proof} Combining \cite[Lemma 4.2]{elAAM} (which says that (Dir-$A^*$) is equivalent to an auxiliary Neumann problem for $A^*$), \cite[Proposition 2.52]{AAH} (which says that this auxiliary Neumann problem is equivalent to a regularity problem for $A$: this is non trivial) with the proof ot Theorem 2.2 in \cite{elAAM} (giving the necessary and sufficient condition below for well-posedness of the regularity problem for $A$), we have that (Dir-$A^*$) is well-posed if and only if 
$$\mR: \ol{\ran(\chi_{+}(T_{A}))} \to  L_{2}(\R^n, \C^{nm}), {\bf f} \mapsto {\bf f}_{\ta}$$ is invertible. This implies that for ${\bf f} \in \ol{\ran(\chi_{+}(T_{A}))}$, we have that 
$$\|{\bf f}\|_{2} \sim \|{\bf f}_{\ta}\|_{2}.$$
Therefore, the conjunction of well-posedness for (Dir-$A$) and (Dir-$A^*$) gives 
 $$\|{\bf f}_{0}\|_{2}   \sim \|{\bf f}_{\ta}\|_{2}, \quad {\bf f} \in \ol{\ran(\chi_{+}(T_{A}))}.$$
 From this, it is easy to identify the domain of $\A$ by  an argument as before.
 \end{proof}
 
We have seen that  invertibility of $\mS$ reduces to  that of  $\mR$ (up to taking adjoints). The only known  way to prove it in such a generality (except for constant coefficients) is via a continuity method  and the Rellich estimates showing that $\|{\bf f}_{\ta}\|_{2} \sim \|(A{\bf f})_{0}\|_{2}$ for all ${\bf f} \in \ol{\ran(\chi_{+}(T_{A}))}.$ This method was first used in the context of Laplace equation on Lipschitz domains by Verchota \cite{V}.  This depends strongly of $A$. Various relations between Dirichlet, regularity and Neumann problems for $L^p$ data in the sense of non tangential approach for second order real symmetric equations are studied in \cite{KP1,KP2} and more recently in \cite{KS,S}.

\bibliographystyle{acm}

\end{document}